\newtheorem{theorem}{Theorem}[section]
\newtheorem{cor}[theorem]{Corollary}
\newtheorem{lemma}[theorem]{Lemma}
\newtheorem{prop}[theorem]{Proposition}
\theoremstyle{definition}
\newtheorem{define}[theorem]{Definition}
\newtheorem{ex}[theorem]{Example}
\newtheorem{remark}[theorem]{Remark}
\newcommand{\field}[1]{\mathbb{#1}}
\newcommand{\zkz}{\field{Z}/k\field{Z}}
\def\revdots{\mathinner{\mkern1mu\raise1pt\vbox{\k ern7pt\hbox{.}}\mkern2mu\raise4pt\hbox{.}\mkern2mu \raise7pt\hbox{.}\mkern1mu}}
\begin{document}
\title[$K$-homology with coefficients II]{Geometric $K$-homology with coefficients II: \\ The Analytic Theory and Isomorphism}
\author{Robin J. Deeley}
\thanks{}
\subjclass[2010]{Primary: 19K33; Secondary: 19K56, 46L85, 55N20  }
\keywords{$K$-homology, $\zkz$-manifolds, the Freed-Melrose index theorem}
\date{}
\begin{abstract}
We discuss the analytic aspects of the geometric model for $K$-homology with coefficients in $\zkz$ constructed in \cite{Dee1}.  In particular, using results of Rosenberg and Schochet, we construct a map from this geometric model to its analytic counterpart.  Moreover, we show that this map is an isomorphism in the case of a finite CW-complex.  The relationship between this map and the Freed-Melrose index theorem is also discussed.  Many of these results are analogous to those of Baum and Douglas in the case of ${\rm spin^c}$ manifolds, geometric K-homology, and Atiyah-Singer index theorem.
\end{abstract}
\maketitle

\section{Introduction}
This is the second in a pair of papers whose topic is the construction of a geometric model for $K$-homology with coefficients using ${\rm spin^c}$ $\zkz$-manifold theory.  Despite this, we have tried to make our treatment as self-contained as possible.  In the first paper, \cite{Dee1}, the cycles and relations for this model were described.  In addition, it was shown that the model fits into the correct Bockstein sequence for the coefficient group $\zkz$.  The goal of this paper is the construction (based on results in \cite{Ros1} and \cite{Sch}) of an analytic model for $K$-homology with coefficients in $\zkz$ and the construction of a map (defined at the level of cycles) from the geometric model in \cite{Dee1} to this analytic model.  The main result is that this map is an isomorphism for finite CW-complexes (see Theorem \ref{thmIsoGeoAnazkz}).  \par
The reader should note the similarity with a number of constructions due to Baum and Douglas (see \cite{BD, BDbor}).  However, a number of our constructions involve noncommutative $C^*$-algebras (rather than the $C^*$-algebra of continuous function on a manifold in the case considered by Baum and Douglas).  These algebras are constructed (based on \cite{Ros1}) using groupoids, but we have endeavoured to make them accessible to the reader unfamiliar with groupoid $C^*$-algebras.  We also use some KK-theory but the amount is quite limited.  Thus, prerequisites are limited to an understanding of the Baum-Douglas model for (geometric) K-homology and the Fredholm module picture of (analytic) K-homology (due to Kasparov, \cite{Kas}).  
\par
The first section summarizes results contained in \cite{Dee1}.  The reader is directed to \cite{BHS} for more on geometric K-homology, \cite{HR} for more on analytic K-homology and \cite{Dee1} (and the references therein) for more on $\zkz$-manifolds, the Freed-Melrose index theorem and the construction of geometric K-homology with coefficients in $\zkz$.  We note that $\zkz$-manifolds were first introduced by Sullivan (see \cite{MS, Sul1, Sul}).  \par
The second section contains the main results of the paper.  Namely, the construction of the map from geometric K-homology with coefficients in $\zkz$ to analytic K-homology with coefficients in $\zkz$ and the proof that this map is an isomorphism in the case of a finite CW-complex.  To put these results in context, we review the analogous results in K-homology (c.f., \cite{BHS}).  The reader should recall that a geometric cycle in K-homology is given by a triple, $(M,E,f)$, where $M$ is compact ${\rm spin^c}$ manifold, $E$ is a vector bundle, and $f$ is a continuous map from $M$ to $X$; ($X$ is the space whose K-homology we are modeling).  An analytic cycle is given by a Fredholm module over $C(X)$.  The map from the first of these theories to the latter is defined in three steps:
\begin{enumerate}
\item To the ${\rm spin^c}$ manifold in a geometric cycle, $(M,E,f)$, we associated a $C^*$-algebra, $C(M)$;
\item We use the ${\rm spin^c}$-structure and the vector bundle, $E$, to produce a Fredholm module (denoted $[D_E]$) in the K-homology of $C(M)$;
\item Finally, the continuous map induces a map from the K-homology of $C(M)$ to the K-homology of $C(X)$ (denoted by $f_*$), which we apply to $[D_E]$ to get a class in the K-homology of $C(X)$.
\end{enumerate}    
The map from geometric cycles to analytic cycles is then defined to be 
\begin{equation}
\mu: (M,E,f) \mapsto f_*([D_E]) \label{isoNoCoeff}
\end{equation}
In \cite{BHS}, this map is shown to be an isomorphism in the case when $X$ is a finite CW-complex. 
\par
The results in the case of $\zkz$-coefficients can be summarized as follows.  We recall (see Definitions \ref{zkzmfld} and \ref{zkzcyc} below) that a geometric $\zkz$-cycle is a triple, $((Q,P),(E,F),f)$, where $(Q,P)$ is a compact ${\rm spin^c}$ $\zkz$-manifold, $(E,F)$ is a $\zkz$-vector bundle, and $f$ is a continuous map from $(Q,P)$ to $X$ (the space whose K-homology with coefficients in $\zkz$ we are modeling).  On the other hand, an analytic $\zkz$-cycle is a Fredholm module over the $C^*$-algebra $C(X)\otimes C^*(pt;\zkz)$, where $C^*(pt;\zkz)$ is the mapping cone of the inclusion of $\field{C}$ into the $k$ by $k$ matrices.  (Based on results in \cite{Sch}, we define analytic K-homology with coefficients in $\zkz$ to be $K^*(C(X)\otimes C^*(pt;\zkz))$.)  The map (which is analogous to the map $\mu$ above) from geometric $\zkz$-cycles to analytic $\zkz$-cycles is defined in three steps:
\begin{enumerate}
\item Using Rosenberg's construction (see \cite{Ros1}), we associated a $C^*$-algebra to the $\zkz$-manifold, $(Q,P)$ (denoted by $C^*(Q,P;\zkz)$);
\item Again, following Rosenberg, we use the ${\rm spin^c}$-structure and the $\zkz$-vector bundle, $(E,F)$, to produce a Fredholm module (denoted $[D_{(E,F)}]$) in the K-homology of $C(Q,P;\zkz)$;
\item Finally, the continuous map, $f$, induces a $*$-homomorphism from $C(X)\otimes C^*(pt;\zkz)$ to $C^*(Q,P;\zkz)$ and hence a map from the K-homology of $C^*(Q,P;\zkz)$ to the K-homology of $C(X)\otimes C^*(pt;\zkz)$.  We denote this map by $\tilde{f}^*$ and then apply it to $[D_{(E,F)}]$ to get a class in the K-homology of $C(X)\otimes C^*(pt;\zkz)$.
\end{enumerate}
To summarize, the map from the geometric theory to the analytic theory is defined via
$$\Phi: ((Q,P),(E,F),f) \mapsto \tilde{f}^*([D_{(E,F)}])$$  
The proof that this map is well-defined in not trivial.  The most involved part is the bordism relation (see Theorem \ref{coborInvClass}).  To show that $\Phi$ is an isomorphism for finite CW-complexes, we use the Bockstein sequences for both the geometric and analytic models, the Five Lemma, and the fact that $\mu$ is an isomorphism for finite CW-complexes (see \cite[Theorem 6.2]{BHS}). 
\par
In the final section of the paper, we discuss the relationship between our results and index theory.  In particular, we discuss the fact that it follows from our construction that the Freed-Melrose index theorem for ${\rm spin^c}$ $\zkz$-manifolds can be conceptualized as a specific case of the isomorphism from geometric to analytic $K$-homology with coefficient in $\zkz$.  This is analogous to Baum and Douglas' conceptualization of the Atiyah-Singer index theorem as a specific case of the isomorphism between geometric and analytic $K$-homology.  We also discuss index pairings in this section.

\subsection{Geometric $\zkz$-cycles} \label{zkzSec}
We now discuss the main results of \cite{Dee1}. 
\begin{define} \label{zkzmfld}
Let $Q$ be an oriented, smooth compact manifold with boundary.  We assume that the boundary of $Q$, $\partial Q$, decomposes into $k$ disjoint manifolds, $(\partial Q)_1, \ldots, (\partial Q)_k$.  A $\zkz$-structure on $Q$ is an oriented manifold $P$, a disjoint collaring neighbourhood, $V_i$ for each $(\partial Q)_i$, and orientation preserving diffeomorphisms $\gamma_i: V_i \rightarrow (0,1]\times P$.  A $\zkz$-manifold is a $Q$ with fixed $\zkz$-structure.  We denote this by $(Q,P,\gamma_i)$.  We sometimes drop the maps from this notation and denote a $\zkz$-manifold by $(Q,P)$. 
\end{define}
Many concepts from differential geometry and topology have natural generalizations from the manifold setting to the $\zkz$-manifold setting.  The generalization of vector bundles to $\zkz$-vector bundles is prototypical.  A $\zkz$-vector bundle is a pair, $(E,F)$, where $E$ is a vector bundle over $Q$, $F$ is a vector bundle over $P$, and $E|_{\partial Q}$ decomposes into k copies of $F$.  To be more precise, the identification of (i.e., isomorphism between) $E|_{\partial Q}$ and the k-copies of $F$ is also considered part of the data.  Additionally, we have natural definitions of a $\zkz$-Riemannian metric, a $\zkz$-fiber bundle, a ${\rm spin^c}$-structure on a $\zkz$-vector bundle, and a ${\rm spin^c}$-structure on a $\zkz$-manifold. The reader can see \cite[Definition 3.1]{Hig} for further details.
\begin{ex}
We consider the manifold with boundary, denoted by $Q$, given in Figure \ref{z3pic} and take $P=S^1$.  Then one can easily see that $(Q,P)$ has the structure of a $\field{Z}/3$-manifold. \label{z3ex}
\end{ex}
\begin{define}
Let $\bar{Q}$ be an $n$-dimensional, oriented, smooth, compact manifold with boundary.  In addition, assume we are given $k$ disjoint, oriented embeddings of an $(n-1)$-dimensional, oriented, smooth, compact manifold with boundary, $\bar{P}$, into $\partial \bar{Q}$.  Using the same notation as Definition \ref{zkzmfld}, we denote this as a triple, $(\bar{Q},\bar{P},\gamma_i)$ (or just $(\bar{Q},\bar{P})$), where $\{\gamma_i\}_{i=1}^k$ denote the $k$ disjoint oriented embeddings.  Such a triple is called a $\zkz$-manifold with boundary.  Its boundary is defined to be the $\zkz$-manifold, $(\partial \bar{Q} - {\rm int}(k\bar{P}), \partial \bar{P})$, where $k\bar{P}$ denotes the $k$ copies of $\bar{P}$ in $\partial \bar{Q}$.  In particular, if a $\zkz-$manifold $(Q,P)$ is the boundary of the $\zkz$-manifold with boundary, $(\bar{Q},\bar{P})$, then $\partial \bar{Q}  =  Q \cup_{\partial Q} ( k\bar{P} )$ and $\partial \bar{P}  =  P$.
\label{zkzmfldbound}
\end{define}
\begin{define}
Let $X$ be a compact space.  A $\zkz$-cycle over $X$ is a triple, $((Q,P),(E,F),f)$, where $(Q,P)$ is a ${\rm spin^c}$ $\zkz$-manifold, $(E,F)$ is a $\zkz$-vector bundle and $f$ is a continuous map from $(Q,P)$ to $X$. \label{zkzcyc}
\end{define}
The reader should note that the continuous map from $(E,F)$ to $X$ must respect the $\zkz$-structure.  If the compact space ($X$ in Definition \ref{zkzcyc}) is clear from the context, then we will refer to $\zkz$-cycles, rather than $\zkz$-cycles over $X$.   
\newpage
\begin{figure}
    \centering
        \includegraphics[width=8cm, height=5cm]{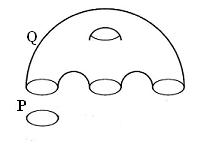} 
    \caption{$\field{Z}/3$-manifold from Example \ref{z3ex}.}
    \label{z3pic}
\end{figure}
 As with the Baum-Douglas model, the addition operation is defined using disjoint union and the inverse of a cycle is given by taking its ``opposite" (see \cite{Dee1}).  
\begin{define} \label{zkzCycleBordism}
A $\zkz$-cycle, $((Q,P),(E,F),f)$, is a boundary if there exist 
\begin{enumerate}
\item A smooth compact ${\rm spin^c}$ $\zkz$-manifold with boundary, $(\bar{Q},\bar{P})$, 
\item A smooth Hermitian $\zkz$-vector bundle $(V,W)$ over $(\bar{Q},\bar{P})$, 
\item A continuous map $\Phi :(\bar{Q},\bar{P}) \rightarrow X$, 
\end{enumerate}
such that $(Q,P)$ is the $\zkz$-boundary of $(\bar{Q},\bar{P})$, $(E,F)= (V,W)|_{\partial (\bar{Q},\bar{P})}$, and $f=\Phi|_{\partial (\bar{Q},\bar{P})}$. We say that $((Q,P),(E,F),f)$ is bordant to $((\hat{Q},\hat{P}),(\hat{E},\hat{F}),\hat{f})$ if 
\begin{equation*}
((Q,P),(E,F),f)\dot{\cup} \;(-(\hat{Q},\hat{P}),(\hat{E},\hat{F}),\hat{f})
\end{equation*}
is a boundary.
\end{define}
\begin{define} \label{vbmzkz}
Vector bundle modification for $\zkz$-cycles is defined as follows.  Let $((Q,P),(E,F),f)$ be a $\zkz$-cycle and $(W,V)$ be an even-dimensional ${\rm spin^c}$ $\zkz$-vector bundle over $(Q,P)$.  We note that $(Q,E,f)$ is a Baum-Douglas cycle with boundary and $(P,F,f|_P)$ is a Baum-Douglas cycle.  As such we can define the $\zkz$-vector bundle modification of $((Q,P),(E,F),f)$ by $(W,V)$ to be the Baum-Douglas vector bundle modification of the cycles $(Q,E,f)$ and $(P,F,f|_P)$ by $W$ and $V$ respectively.  The compatibility required by the definition of a $\zkz$-vector bundle ensures that the result of such a modification forms a $\zkz$-cycle. 
\end{define} 
\begin{define}
We define $K_*(X;\zkz)$ to be the set of equivalence classes of $\zkz$-cycles where the equivalence relation is generated by:  
\begin{enumerate}
\item If $\epsilon_1=((Q,P),(E_1,F_1),\phi)$ and $\epsilon_2=((Q,P),(E_2,F_2),\phi)$ are $\zkz$-cycles, then $$\epsilon_1 \dot{\cup} \epsilon_2 \sim ((Q,P),(E_1\oplus E_2,F_1\oplus F_2), \phi)$$
\item Bordant $\zkz$-cycles are defined to be equivalent; 
\item A $\zkz$-cycle is defined to be equivalent to its vector bundle modification by any even-dimensional ${\rm spin^c}$ vector bundle.
\end{enumerate}
\end{define}
The set $K_*(X;\zkz)$ is a graded abelian group with the operation of disjoint union.  The Bockstein sequence for the model takes the following form (see \cite[Theorem 2.20]{Dee1}).
\begin{theorem} \label{Bockstein}
If $X$ is finite CW-complex, then the following sequence is exact. \newline
\begin{center}
$\begin{CD}
K_0(X) @>k>> K_0(X) @>r>> K_0(X;\zkz) \\
@AA\delta A @. @VV\delta V \\
K_1(X;\zkz) @<r<<  K_1(X) @<k<< K_1(X) 
\end{CD}$
\end{center}
where the maps are 
\begin{enumerate}
\item $k : K_*(X) \rightarrow K_*(X)$ is given by multiplication by $k$;
\item $r: K_*(X) \rightarrow K_*(X;\zkz)$ takes a cycle $(M,E,f)$ to $((M,\emptyset),(E, \emptyset),f)$;  
\item $\delta : K_*(X;\zkz) \rightarrow K_{*+1}(X)$ maps the cycle $((Q,P),(E,F),f)$ to $(P,F,f)$.  
\end{enumerate}
\end{theorem}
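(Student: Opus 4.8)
The plan is to verify exactness of the six-term cyclic sequence directly at the level of geometric cycles. By the even/odd symmetry of the diagram there are three essentially distinct positions to check: (a) at $K_*(X)$ between the incoming $k$ and the outgoing $r$, where I must show $\ker r = \mathrm{im}\, k$; (b) at $K_*(X;\zkz)$ between the incoming $r$ and the outgoing $\delta$, where I must show $\ker \delta = \mathrm{im}\, r$; and (c) at $K_*(X)$ between the incoming $\delta$ and the outgoing $k$, where I must show $\ker k = \mathrm{im}\, \delta$. At each position I would first check that the relevant composite vanishes (the easy inclusion $\mathrm{im} \subseteq \ker$) and then construct, from a cycle in the kernel, an explicit preimage (the reverse inclusion).

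The composite-is-zero checks are all short and geometric. For $\delta \circ r$: the cycle $r(M,E,f) = ((M,\emptyset),(E,\emptyset),f)$ has empty $P$, so $\delta$ returns the empty cycle. For $k \circ \delta$: given $\zeta = ((Q,P),(E,F),f)$, the Baum--Douglas cycle $(Q,E,f)$ has boundary the $k$ disjoint copies of $(P,F,f|_P)$ comprising $\partial Q$, which exhibits $k \cdot \delta(\zeta)$ as a Baum--Douglas boundary, hence $0$. For $r \circ k$: take $\bar Q = \bigsqcup_{i=1}^{k} M \times [0,1]$ with $\bar P = M$ (closed, so $P = \partial \bar P = \emptyset$), embedding the $k$ copies of $\bar P$ as $M \times \{1\}$ in the respective cylinders; by Definition \ref{zkzmfldbound} the $\zkz$-boundary is the $k$ copies $M \times \{0\}$, so $k \cdot r(M,E,f)$ is a $\zkz$-boundary.

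The content of the theorem lies in the reverse inclusions, each obtained by a capping-off or assembly construction. For (b), if $\delta(\zeta) = 0$ then $(P,F,f|_P)$ bounds a Baum--Douglas cycle $(\bar P, W, g)$; gluing a copy of this bounding cycle onto each of the $k$ boundary components of $Q$ caps off $\partial Q$ and produces a closed $spin^c$ cycle whose image under $r$ is $\zkz$-bordant to $\zeta$, so $\zeta \in \mathrm{im}\, r$. For (c), if $k \cdot (P,F,g) = 0$ then $k$ disjoint copies of $(P,F,g)$ bound a Baum--Douglas cycle $(Q,E,f)$; the product and collar data near the $k$ boundary components equip $Q$ with a $\zkz$-structure, producing a $\zkz$-cycle $\zeta$ with $\delta(\zeta) = (P,F,g)$. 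For (a), if $r(M,E,f) = 0$ then $(M,\emptyset)$ is the $\zkz$-boundary of some $(\bar Q, \bar P)$ with $\bar P$ closed; reading $\partial \bar Q = M \sqcup (k\bar P)$ as a Baum--Douglas bordism shows $[(M,E,f)] = k \cdot [(\bar P, W|_{\bar P}, \Phi|_{\bar P})] \in \mathrm{im}\, k$.

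The main obstacle, and where the real work lies, is that ``$=0$'' in each group means equivalence to the empty cycle under the full relation generated by disjoint union/direct sum, bordism, \emph{and} vector bundle modification, not bordism alone. Before the capping and assembly constructions above can be applied, I would therefore establish a normal-form lemma showing that any such equivalence (in particular, being equivalent to the empty cycle) can be realized by a single bordism after performing suitable direct sums and vector bundle modifications; this is the analogue of the corresponding reduction in Baum--Douglas theory. A secondary technical point is verifying that the assembled manifolds carry genuine $\zkz$-structures: one must check that the $spin^c$-structures, Riemannian metrics, and collar identifications of Definition \ref{someDefzkz} match across the gluings, and that the vector bundle modifications used in the normal-form step are carried out compatibly with the $(E,F,\theta_i)$ data so that the result is again a $\zkz$-cycle.
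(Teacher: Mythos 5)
You should first know that this paper contains no proof of Theorem \ref{Bockstein} to compare against: the theorem is imported as background, with the proof deferred to Theorem 80 of \cite{Dee1}. So your proposal can only be measured against the cycle-level strategy of that first paper, and your overall architecture is indeed the natural one: the three ``composite is zero'' checks via explicit bordisms (the cylinder construction for $r\circ k$, the observation that $(Q,E,f)$ bounds $k$ copies of $(P,F,f|_P)$ for $k\circ\delta$, and the empty-$P$ observation for $\delta\circ r$) are correct as written, and you are right that the content lies in the reverse inclusions and that one needs a normal-form lemma because vanishing in these groups means equivalence under the full relation, not bordism alone.

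The genuine gap is in steps (b) and (c), and it is precisely the point you set aside as ``a secondary technical point.'' Grant the normal-form lemma: from $\delta(\zeta)=0$ you obtain a $spin^c$ vector bundle $V$ over $P$ such that the modification $(P^V,\ldots)$ bounds some $(\bar P,W,g)$ --- not that $(P,F,f|_P)$ itself bounds, as your step (b) assumes. To start your capping construction you must first replace $\zeta=((Q,P),(E,F),f)$ by an equivalent $\zkz$-cycle whose $P$-part is $P^V$, and by Definition \ref{vbmzkz} this is only possible if $V$ is realized as the $P$-component of a $spin^c$ $\zkz$-vector bundle $(W,V)$ over $(Q,P)$. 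That is an extension problem, not a verification: $V$ pulls back to $\partial Q\cong P\times\zkz$, but a bundle on $\partial Q$ need not extend over $Q$ (its class must lie in the image of the restriction $K^0(Q)\to K^0(\partial Q)$), and nothing in the normal-form lemma lets you choose $V$ extendable. Without such an extension, the bounding manifold has boundary $k$ copies of $P^V$, which cannot be glued to $\partial Q\cong kP$, and the argument stalls. The same obstruction hits (c) twice: the normal form for $k\cdot(P,F,g)\sim 0$ may modify the $k$ copies of $P$ by $k$ \emph{different} bundles $V_1,\ldots,V_k$ (so the bounding manifold's boundary is not $k$ copies of one cycle and carries no $\zkz$-structure), and the obvious fix --- modifying copy $i$ further by $\bigoplus_{j\neq i}V_j$ --- requires extending bundles over the bounding manifold, which is obstructed in exactly the same way; note also that a vector bundle modification of a cycle that bounds need not bound, for the same reason. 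Only your step (a) is immune, since there $P=\emptyset$ and modifications of $((M,\emptyset),(E,\emptyset),f)$ are unobstructed. Engineering around this lifting problem is where the real work of \cite{Dee1} lies, so as written your (b) and (c) do not go through.
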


\section{Main Results} \label{anaVsTopZkz} 
In this section, we deal with the isomorphism between geometric and analytic $K$-homology with coefficients in $\zkz$.  In \cite{Sch}, Schochet defines an analytic model for $K$-homology with coefficients.  We use results of Rosenberg (see \cite{Ros1}) to link Schochet's analytic cycles to $\zkz$-manifold theory.  This leads to the construction of a map from the geometric model developed in \cite{Dee1} (also see Section \ref{zkzSec} above) to this analytic model.  
\par
To do so, we introduce (and generalize) the construction of a groupoid $C^*$-algebra from a $\zkz$-manifold developed in \cite{Ros1}.  Then the map from geometric cycles to analytic cycles is defined and it is proved (under the condition that $X$ is a finite CW-complex) that it is an isomorphism.  The construction of $C^*$-algebras from $\zkz$-manifolds introduced in \cite{Ros1} uses the theory of groupoid $C^*$-algebras.  The theory of groupoid $C^*$-algebras is developed in great detail in \cite{Ren}.  We will not need the full power of this theory and the reader unfamiliar with it could possibly take Equation \ref{cStarZkz} in Example \ref{RosGroEx} as the definition of the $C^*$-algebra associated to a $\zkz$-manifold.

\subsection{Analytic $K$-homology with coefficients in $\zkz$}
\begin{define} \label{mappingConePt}
Let $C^*(pt;\zkz)$ denote the mapping cone of the inclusion of $\field{C}$ into the $k$ by $k$ matrices, $M_k$.  That is, we let
\begin{equation}
C^*(pt;\zkz):=\{f\in C_0([0,\infty),M_k)|f(0) \hbox{ is a multiple of }I_k\} \label{mapConek}
\end{equation}
\end{define}
Basic properties of mapping cones imply that 
$$0 \rightarrow C_0((0,\infty),M_k) \rightarrow C^*(pt;\zkz) \rightarrow \field{C} \rightarrow 0$$
is exact and that $K^0(C^*(pt;\zkz))  \cong  \zkz$ and $K^1(C^*(pt;\zkz))  \cong  0$.
Using \cite[Section 5-6]{Sch}, we then have the following definition. 
\begin{define}
Let $X$ be a compact Hausdorff space.  Then, $K^{ana}_p(X;\zkz):=K^{-p}(C(X)\otimes C^*(pt,\zkz))$. \label{anaKhomCoeffDef}
\end{define}
\begin{remark}
The Bockstein sequence for $K^{ana}_*(X;\zkz)$ is given by the six-term exact sequence associated to short exact sequence of $C^*$-algebras: 
$$0 \rightarrow C(X)\otimes C_0((0,\infty),M_k) \rightarrow C(X)\otimes C^*(pt;\zkz) \rightarrow C(X) \rightarrow 0 $$
\end{remark}
\newpage 
\begin{figure}
    \centering
        \includegraphics[width=8cm, height=7cm]{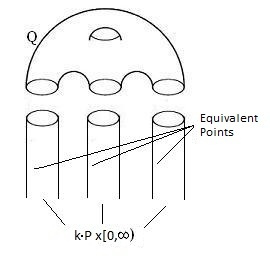} 
    \caption{The equivalence relation on the $\field{Z}/3$-manifold from Example \ref{z3ex}.}
    \label{groupoidC*zkz}
\end{figure}
\subsection{Rosenberg's Groupoid $C^*$-algebra}
We generalize Rosenberg's construction in \cite{Ros1} (also see \cite{Ros2}).  Rosenberg's construction is discussed in Example \ref{RosGroEx} below.  Moreover, Figure \ref{groupoidC*zkz} should be helpful to the reader during both the discussion of this example and the general construction.  \par 
We will work in a general framework, but Examples \ref{RosGroEx} and \ref{GroZkzWithBound} are our main concern.  The setup is the following.  Let $N$ be a smooth manifold.  The reader should note that $N$ may have boundary and is not necessarily compact.  Moreover, suppose that 
\begin{enumerate}
\item $N=M^Q \cup_{\Sigma} M^P$ where $M^Q$ and $M^P$ are manifolds with boundary and $\Sigma$ is a manifold (possibly with boundary);
\item $\Sigma \subseteq \partial M^Q$ and $\Sigma \subseteq \partial M^P$;
\item $M^P = k \cdot R$ for some manifold $R$ and $\Sigma = k \cdot \Sigma_R$ for some manifold $\Sigma_R$;
\end{enumerate}

An important case of this setup was considered by Rosenberg in \cite{Ros1}.  Let $(Q,P)$ be a $\zkz$-manifold and (using the notation above) let
\begin{equation}
M^Q=Q,\: M^P=\partial Q \times [0, \infty),\: \Sigma=\partial Q,\: R=P\times [0,\infty),\: \Sigma_R=P \label{spCas}
\end{equation}
We will discuss this case in more detail in Example \ref{RosGroEx} below. \par
Returning to the general setup, we construct a groupoid via an equivalence relation on $N$.  Figure \ref{groupoidC*zkz} illustrates the important special case (i.e., Equation \ref{spCas}).  The relation is defined by 
\begin{enumerate}
\item If $n \in M^Q$, then $n$ is equivalent only to itself.  We note that this includes points in $\partial Q$ and hence points in $\Sigma$.
\item If $n, n^{\prime} \in M^P-\Sigma$, then $n \sim n^{\prime}$ if and only if 
$$\pi(n)=\pi(n^{\prime}) \in R$$
where $\pi$ denotes the trivial covering map $M^P \rightarrow R$. 
\end{enumerate}
\begin{define} \label{groupoidC*algebraGen}
Using the notation and constructions in the previous paragraphs, we let $\mathcal{G} \subset N \times N$ denote the groupoid associated to this equivalence relation and let $C^*(\mathcal{G})$ denote the associated groupoid $C^*$-algebra.
\end{define}
A remark about notation is in order.  In the setup above, no assumption on compactness has been made; hence we will work with $C_0$-functions.  In particular, if $W$ is a manifold with boundary then $C_0(W)$ denotes continuous functions which vanish at $\infty$ but which take (possibly) nonzero values on the boundary of $W$.  While $C_0({\rm int}(W))$ denotes the continuous functions that vanish at $\infty$ and on the boundary.  We let $M_k$ denote the $k$ by $k$ matrices and if $M$ is a space, then $C_0(M,M_k)$ denotes the continuous functions from $M$ to $M_k$ which vanish at $\infty$.
\begin{prop}
Let $C^*(\mathcal{G})$ be the $C^*$-algebra from Definition \ref{groupoidC*algebraGen}.  Then it is isomorphic to 
\begin{equation*}
\{(f,g)\in C_0(M^Q) \oplus C_0(R,M_k) \mid g|_{\Sigma_R}\hbox{ is diagonal and }f|_{\Sigma}=g|_{\Sigma_R} \}
\end{equation*}
We note that the statement $f|_{\Sigma}=g|_{\Sigma_R}$ is more correctly written as 
$$\alpha(f|_{\Sigma})=g|_{\Sigma_R}$$
where $\alpha: C_0(\Sigma) \cong \oplus_{i=1}^k C_0(\Sigma_R) \rightarrow M_k(C_0(\Sigma_R))$ is the diagonal inclusion.
\label{PropAboutGroupoid}
\end{prop}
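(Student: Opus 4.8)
The plan is to build an explicit $*$-isomorphism by recording, for each element of the convolution algebra of $\mathcal{G}$, its values along the two kinds of orbits occurring in the unit space $N$. Recall that every orbit of $\mathcal{G}$ is finite: a point $n\in M^Q$ (including $n\in\Sigma$) forms a singleton orbit, while a point of $M^P-\Sigma$ lying over $x\in R\setminus\Sigma_R$ sits in an orbit $\pi^{-1}(x)=\{x_1,\dots,x_k\}$ of size $k$. Hence $\mathcal{G}$ is an \'etale principal groupoid with the counting-measure Haar system, and on $C_c(\mathcal{G})$ the convolution $(a*b)(\gamma)=\sum_{\gamma=\gamma_1\gamma_2}a(\gamma_1)b(\gamma_2)$ restricts to ordinary pointwise multiplication over the singleton orbits in $M^Q$ and to $k\times k$ matrix multiplication over the orbits in $R\setminus\Sigma_R$.

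First I would define the candidate map $\Psi$ on $C_c(\mathcal{G})$ by $\Psi(a)=(f,g)$, where $f(n)=a(n,n)$ for $n\in M^Q$ and $g(x)_{ij}=a(x_i,x_j)$ for $x\in R\setminus\Sigma_R$ with $\pi^{-1}(x)=\{x_1,\dots,x_k\}$. The remarks above show at once that $\Psi$ is multiplicative, and the groupoid involution $a^*(\gamma)=\overline{a(\gamma^{-1})}$ turns into complex conjugation on the $M^Q$-component and the matrix adjoint on the $R$-component, so $\Psi$ is a $*$-homomorphism into $C_0(M^Q)\oplus C_0(R,M_k)$. It is injective because every arrow of $\mathcal{G}$ is either a unit over a point of $M^Q$ (recorded by $f$) or of the form $(x_i,x_j)$ over a point of $R\setminus\Sigma_R$ (recorded by an entry of $g$).

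The heart of the argument is identifying the image, i.e.\ extracting exactly the two matching conditions in the statement, and this is where the topology of $\mathcal{G}$ near $\Sigma$ is decisive. The key feature is that $\mathcal{G}$ is \emph{not} closed in $N\times N$: as $x\to x_0\in\Sigma_R$ the off-diagonal arrow $(x_i,x_j)$ with $i\ne j$ tends in $N\times N$ to $\big((x_0)_i,(x_0)_j\big)$, which lies over $\Sigma\subseteq M^Q$ and is therefore not an arrow of $\mathcal{G}$. Thus the off-diagonal part of $\mathcal{G}$ has its end over $\Sigma$ genuinely at infinity, forcing $g(x)_{ij}\to 0$ for $i\ne j$; this is precisely the requirement that $g|_{\Sigma_R}$ be diagonal. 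The diagonal arrow $(x_i,x_i)$, on the other hand, limits to the unit over $(x_0)_i\in\Sigma\subseteq M^Q$, so continuity of $a$ gives $g(x)_{ii}\to f((x_0)_i)$; matching the $i$-th sheet of $\Sigma=k\cdot\Sigma_R$ with the $i$-th diagonal entry, this is exactly $\alpha(f|_\Sigma)=g|_{\Sigma_R}$. Conversely, any pair $(f,g)$ satisfying these two conditions is readily seen to define, through the same formulas, a genuine continuous function on $\mathcal{G}$ vanishing at infinity, so $\Psi$ is onto the stated subalgebra.

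Finally I would upgrade this $*$-isomorphism of dense subalgebras to a $C^*$-isomorphism. Since $\mathcal{G}$ is a principal \'etale groupoid with finite (size $\le k$) orbits, it is amenable and $C^*(\mathcal{G})=C^*_r(\mathcal{G})$; its norm is the supremum of the fiberwise operator norms of the regular representations on $\ell^2$ of the orbits, which under $\Psi$ is exactly the norm of $C_0(M^Q)\oplus C_0(R,M_k)$. Hence $\Psi$ is isometric and extends to the claimed isomorphism. The main obstacle I anticipate is making the image computation rigorous: one must fix the locally compact Hausdorff topology on $\mathcal{G}$ carefully near $\Sigma$ (so that the off-diagonal sheets really do escape to infinity there), and then verify that the two matching conditions are not merely necessary but exactly characterize which orbit-data glue to an element of $C_0(\mathcal{G})$.
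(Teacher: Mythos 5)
Your proposal is correct and takes essentially the same approach as the paper: the paper's (sketched) proof defines exactly the same map $h \mapsto (f_h,g_h)$ on $C_c(\mathcal{G})$ --- diagonal values on $M^Q$, the $k\times k$ matrix of values over $R-\Sigma_R$, with the diagonal extension over $\Sigma_R$ --- and leaves the verification to the reader. Your analysis of the topology of $\mathcal{G}$ near $\Sigma$ (off-diagonal arrows escaping to infinity, forcing diagonality at $\Sigma_R$) and the norm identification via the finite-orbit regular representations supply precisely the details the paper omits.
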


\begin{proof}
We only sketch the ideas of the proof, leaving the details for the interested reader.  To begin, we review some notation.  Recall (see the paragraphs preceding Definition \ref{groupoidC*algebraGen}) that $N=M^Q \cup_{\Sigma} M^P$ where $M^P=k\cdot R$ and $\mathcal{G}$ denotes the equivalence relation (defined above) on $N$.  We will denote an element of $\mathcal{G}$, $n_1 \sim n_2$, as $(n_1,n_2)\in N\times N$.  In addition, if $p\in R$, then we let $p_1, \ldots, p_k$ denote the preimages of $p$ under the (trivial) covering map $M^P \rightarrow R$.  \par
Let $h\in C_c(\mathcal{G})$ (i.e., a continuous function with compact support on $\mathcal{G}$) and define a map:
$$h \mapsto (f_h, g_h) \in C_0(M^Q) \oplus C_0(R,M_k)$$
as follows.  For $q\in M^Q$,  
$$f_h(q):=h(q,q)$$ and, for $p\in R-\Sigma_R$, we define
$$g_h(p):=[ h(p_i,p_j)]_{i=1,\ldots,k,j=1,\ldots,k}$$      
where we have used the definition of $\{p_i\}_{i=1}^k$ discussed in the first paragraph of the proof. Finally, for $\tilde{p}\in \Sigma_R$, we define, $g_h(\tilde{p})$ to be the diagonal $k$ by $k$ matrix with entries along the diagonal given by 
$$ h(p_1,p_1), \ldots, h(p_k,p_k) $$
It is now left to the reader to show that this map is well-defined and extends to an isomorphism from $C^*(\mathcal{G})$ to the $C^*$-algebra in the statement of the proposition (i.e., Equation \ref{PropAboutGroupoid}).
\end{proof}
\begin{cor}
The $C^*$-algebra from Definition \ref{groupoidC*algebraGen} (i.e., $C^*(\mathcal{G})$) fits into the following exact sequence: 
\begin{equation}
0\rightarrow C_0(R-\Sigma_R) \otimes M_k \rightarrow C^*(\mathcal{G}) \rightarrow C_0(M^Q) \rightarrow 0
\label{genExaForGro}
\end{equation} 
\end{cor}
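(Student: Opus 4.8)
The plan is to leverage the concrete description of $C^*(\mathcal{G})$ furnished by Proposition \ref{PropAboutGroupoid} and to realize the exact sequence \eqref{genExaForGro} via the obvious coordinate projection. Specifically, I would define
$$\pi_Q : C^*(\mathcal{G}) \to C_0(M^Q), \qquad (f,g) \mapsto f.$$
Since the $C^*$-algebra structure on the set appearing in Proposition \ref{PropAboutGroupoid} is inherited pointwise from $C_0(M^Q)\oplus C_0(R,M_k)$, this map is manifestly a $*$-homomorphism, and the whole argument reduces to (i) checking that $\pi_Q$ is surjective and (ii) identifying its kernel with $C_0(R-\Sigma_R)\otimes M_k$.

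For surjectivity, given $f \in C_0(M^Q)$, its restriction $f|_\Sigma$ determines, via the identification $C_0(\Sigma)\cong \oplus_{i=1}^k C_0(\Sigma_R)$ used to define $\alpha$ in Proposition \ref{PropAboutGroupoid}, a $k$-tuple of functions on $\Sigma_R$; these are precisely the diagonal entries that any admissible lift $g$ must have on $\Sigma_R$. I would then construct a lift $g\in C_0(R,M_k)$ by extending this diagonal, $\Sigma_R$-supported boundary datum to all of $R$ (taking the off-diagonal entries to be zero). Since $\Sigma_R$ is a closed subset of the locally compact Hausdorff space $R$ and the prescribed boundary function lies in $C_0(\Sigma_R)$, the Tietze extension theorem (in its $C_0$ form) supplies such a $g$; by construction $(f,g)$ satisfies the two constraints of Proposition \ref{PropAboutGroupoid}, so it lies in $C^*(\mathcal{G})$ and maps to $f$.

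Next I would compute $\ker \pi_Q$. An element $(0,g)$ lies in $C^*(\mathcal{G})$ precisely when $g|_{\Sigma_R}$ is diagonal and equals $\alpha(0|_\Sigma)=0$; hence the kernel consists of the pairs $(0,g)$ with $g\in C_0(R,M_k)$ vanishing on $\Sigma_R$. Functions in $C_0(R,M_k)$ that vanish at infinity and on the closed set $\Sigma_R$ form exactly $C_0(R-\Sigma_R, M_k)$, which is canonically $C_0(R-\Sigma_R)\otimes M_k$. Thus $g \mapsto (0,g)$ identifies $C_0(R-\Sigma_R)\otimes M_k$ with $\ker\pi_Q$, and it is routine to check this is an ideal (a product $(0,g)(f',g')=(0,gg')$ again vanishes on $\Sigma_R$). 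Together with the evident injectivity of the inclusion, this establishes the exactness of \eqref{genExaForGro}.

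The only genuinely nontrivial point is the surjectivity step: one must guarantee that the boundary value forced on $g$ by the compatibility condition $f|_\Sigma = g|_{\Sigma_R}$ can actually be realized by a global section vanishing at infinity. This is where the $C_0$ Tietze extension theorem, together with the closedness of $\Sigma_R$ in $R$ (which holds since $\Sigma_R \subseteq \partial R$), does the work. Everything else is bookkeeping internal to the description in Proposition \ref{PropAboutGroupoid}.
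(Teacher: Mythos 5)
Your proof is correct and follows exactly the route the paper intends: the corollary is the immediate consequence of the concrete description in Proposition \ref{PropAboutGroupoid}, with the quotient map given by the coordinate projection $(f,g)\mapsto f$, surjectivity supplied by ($C_0$-)Tietze extension of the diagonal boundary datum, and the kernel identified with pairs $(0,g)$ where $g$ vanishes on the closed subset $\Sigma_R$, i.e.\ $C_0(R-\Sigma_R)\otimes M_k$. The only caveat is that closedness of $\Sigma_R$ in $R$ does not follow merely from $\Sigma_R\subseteq\partial R$; it is an implicit standing assumption of the setup (and holds in all the paper's cases, e.g.\ $\Sigma_R=P\times\{0\}$ in $R=P\times[0,\infty)$), already needed for the restriction conditions in Proposition \ref{PropAboutGroupoid} to define a closed subalgebra.
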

Suppose that we have a Riemannian metric on $N$ and let $L^2(N)$ denote the Hilbert space of $L^2$-sections.  If the metric respects the decomposition of $N$, then $L^2(N)$ has the structure of a $\zkz$-Hilbert space (see \cite[Definition 3.2]{Hig}).  That is, we have isometries, $e_i: L^2(R) \rightarrow L^2(N)$, where $i=1,\ldots,k$.  \par     
Using this data and Proposition \ref{PropAboutGroupoid}, there is natural representation (denoted $\rho_{\mathcal{G}}$) of  $C^*(\mathcal{G})$ on $L^2(N)$ defined via
\begin{equation} (\rho_{\mathcal{G}}(f,g) \cdot \xi)(n):= \left\{ \begin{array}{ccc} f(n)\xi(n) & : & n\in M^Q \\ (\sum_{i=1}^k \sum_{j=1}^k e_i M_{g_{ij}} e_j^* \xi)(n) & : & n\in M^P \end{array}\right. 
\label{repOfGro}
\end{equation}
where $M_{g_{ij}}$ denotes the multiplication operator (associated to $g_{ij}$) on $L^2(R)$.  The reader will note that $M^Q \cap M^P=\Sigma \ne \emptyset$.  However, the condition in the equation in Proposition \ref{PropAboutGroupoid} implies that the two possible definitions for $n\in \Sigma$ agree.  We leave the proof that $\rho_{\mathcal{G}}$ is a representation to the reader; who should notice the relationship between matrix multiplication and the interaction of $e_i$ and $e_j^*$ in the formula of $\rho_{\mathcal{G}}$.  
\begin{ex} \label{RosGroEx}
The prototypical example of a groupoid of the form discussed in Definition \ref{groupoidC*algebraGen} is the one constructed in \cite{Ros1}.  The construction is as follows.  Let $(Q,P)$ be a $\zkz$-manifold with diffeomorphism $\phi : \partial Q \rightarrow P \times \zkz$.  We denote by $N$ the manifold without boundary given by $Q\cup_{\partial Q} \partial Q \times [0,\infty)$, where we are identifying $\partial Q$ with $\partial Q \times \{0\}$.  The reader should note that $N$ is usually not compact and that (in the notation of Definition \ref{groupoidC*algebraGen}) $M=\partial Q \times [0,\infty)$ and $\bar{P}= P \times [0,\infty)$. \par    
Let $\mathcal{G}$ denote the groupoid constructed using the process discussed above (see Definition \ref{groupoidC*algebraGen}) and $C^*(Q,P;\zkz)$ denote $C^*(\mathcal{G})$.  The content of Proposition \ref{PropAboutGroupoid} in this case takes the form:
\begin{eqnarray}
C^*(Q,P;\zkz) & \cong & \{(f,g)\in C(Q) \oplus C_0(P\times [0,\infty),M_k) \mid \label{cStarZkz}  \\
& &   g|_{P\times \{0\}}\hbox{ is diagonal and }f|_{\partial Q}=g|_{P\times  \{0\}} \}  \nonumber
\end{eqnarray}
Recall that the statement $f|_{\partial Q}=g|_{P\times  \{0\}}$ in Equation \ref{cStarZkz} is more correctly written as 
$$\alpha(f|_{\partial Q})=g|_{P\times  \{0\}}$$
where $\alpha: C(\partial Q) \cong \oplus_{i=1}^k C(P) \rightarrow M_k(C(P))$ is the diagonal inclusion. \par 
In addition, we have the following exact sequence (see Equation \ref{genExaForGro} or \cite{Ros1}).
\begin{equation*}
0\rightarrow C_0(\field{R})\otimes C(P) \otimes M_k \rightarrow C^*(Q,P;\zkz) \rightarrow C(Q) \rightarrow 0
\end{equation*}
\end{ex}
\begin{remark} \label{emptySetZkz}
If we are given a $\zkz$-manifold of the form $(M,\emptyset)$ (where $M$ is a compact manifold), then we let
$$C^*(M,\emptyset;\zkz):= C(M)\otimes C^*(pt;\zkz)$$
\end{remark}
\begin{ex} \label{GroZkzWithBound}
In this example, we consider the case of a $\zkz$-manifold with boundary.  We will form two $C^*$-algebra; they are analogous to $C_0({\rm int}(W))$ and $C(W)$ in the case of a compact manifold with boundary $W$.  To fix notation, let $(\bar{Q},\bar{P})$ be a $\zkz$-manifold with boundary, $(Q,P)$.  \par
We begin with the $C^*$-algebra which is analogous to $C_0({\rm int}(W))$.  In the notation of our basic setup (see the discussion preceding Definition \ref{groupoidC*algebraGen}), let
$$ M^Q= \bar{Q}-Q, M^P= \partial \bar{Q}-Q \times [0,\infty), \Sigma= \partial \bar{Q}-Q, R={\rm int}(\bar{P})\times [0,\infty), \Sigma_R={\rm int}(\bar{P}) $$   
and form the associated groupoid $C^*$-algebra, which will be denoted by $C^*_0(\bar{Q},\bar{P};\zkz)$.  Proposition \ref{PropAboutGroupoid} implies that 
\begin{eqnarray*}
C^*_0(\bar{Q},\bar{P},\tilde{\pi}) & = & \{(f,g)\in C(\bar{Q}) \oplus C_0({\rm int}(\bar{P})\times [0,\infty),M_k) \\
& & |\  f|_Q =0 \ , \ g|_{\bar{P}\times \{0\}}\hbox{ is diagonal and }f|_{\partial \bar{Q} - Q}=g|_{{\rm int}(\bar{P})\times  \{0\}} \} 
\end{eqnarray*}
Again the identification discussed in Proposition \ref{PropAboutGroupoid} has been used here. \par
Next, we discuss the $C^*$-algebra which is analogous to $C(W)$ in the case of a manifold with boundary.  Again, using the setup discussed before Definition \ref{groupoidC*algebraGen}, let
$$ M^Q= \bar{Q}-{\rm int}(Q), M^P= \partial \bar{Q} - {\rm int}(Q) \times [0,\infty), \Sigma= \partial \bar{Q}-{\rm int}(Q), R=\bar{P}\times [0,\infty), \Sigma_R=\bar{P} $$
In this case, we denote the associated groupoid $C^*$-algebra by $C^*(\bar{Q},\bar{P};\zkz)$.  Proposition \ref{PropAboutGroupoid} implies that
\begin{eqnarray*}
C^*(\bar{Q},\bar{P},\tilde{\pi}) & = & \{(f,g)\in C(\bar{Q}) \oplus C_0(\bar{P}\times [0,\infty),M_k) \\
& & |\  g|_{\bar{P}\times \{0\}}\hbox{ is diagonal and }f|_{\partial \bar{Q}-{\rm {\rm int}}(Q)}=g|_{\bar{P}\times  \{0\}} \}
\end{eqnarray*}
Note that we have (yet again) used the identification discussed in Proposition \ref{PropAboutGroupoid}. \par
Moreover, we have the following exact sequence:
\begin{equation} \label{zkzShortExact}
0 \rightarrow C^*_0(\bar{Q},\bar{P};\zkz) \rightarrow C^*(\bar{Q},\bar{P};\zkz) \rightarrow C^*(Q,P;\zkz) \rightarrow 0 
\end{equation}
This is the $\zkz$-version of the exact sequence:
$$0 \rightarrow C_0({\rm int}(W)) \rightarrow C(W) \rightarrow C(\partial W) \rightarrow 0$$
in the case of a manifold with boundary $W$.
\end{ex}
We now consider natural classes in the $K$-homology of these groupoid $C^*$-algebras.  We have followed \cite{Ros1} for this development.  The setup is as follows.  Let $(Q,P)$ be a ${\rm spin^c}$ $\zkz$-manifold with ${\rm dim}(Q)=n$ and $D$ be the Dirac operator on it (possibly twisted by a $\zkz$-vector bundle).  Let $S_Q$ denote the Dirac bundle to which $D$ is associated.  Then $S_Q$ extends to a Dirac bundle on $N$ (which we denote by $S_N$) and $D$ also extends to all of $N$.  Let the extension of the operator, $D$, to $N$ be denoted by $D_N$.  By Theorem 10.6.5 of \cite{HR}, we can form $[D_N]\in K^{-n}(C_0(N))$.  Moreover, this class is equivariant with respect to the groupoid associated to $(Q,P)$; hence $D_N$ defines a class in $K^{-n}(C^*(Q,P;\zkz))$.  To simplify notation, the class produced from this construction will be denoted by $[D]\in K^{-n}(C^*(Q,P;\zkz))$.  This class is represented by the following Fredholm module:
$$(L^2(N),\rho,\chi(D))$$
where 
\begin{enumerate}
\item $L^2(N)$ denotes the completion of the compactly supported, smooth sections of the spinor bundle (possibly twisted by a $\zkz$-vector bundle; see \cite[Section 10.1]{HR} for details);
\item $\rho_{(Q,P)}: C^*(Q,P) \rightarrow L^2(N)$ is defined in Equation \ref{repOfGro};
\item $\chi$ is a normalizing function (see \cite[Definition 10.6.1]{HR});
\item $D$ is the Dirac operator on $N$ (possibly twisted by a $\zkz$-vector bundle); 
\end{enumerate} 
\par
Moreover, the same construction applies verbatim to produce a class $[D] \in K^{-n}(C_0^*(\bar{Q},\bar{P};\zkz))$ where $(\bar{Q},\bar{P})$ is a ${\rm spin^c}$ $\zkz$-manifold with boundary and $D$ is again the Dirac operator (possibly twisted by a $\zkz$-vector bundle).  The next proposition summarizes basic properties of the $K$-homology classes produced by this construction.
\begin{prop} \label{propZkzClass}
Let $(Q,P)$ be a ${\rm spin^c}$ $\zkz$-manifold with $dim(Q)=n$ and $D_{(Q,P)}$ denote the Dirac operator on $(Q,P)$ (possibly twisted by a $\zkz$-vector bundle).  Then the associated class $[D_{(Q,P)}]\in K^{-n}(C^*(Q,P;\zkz))$ has the following properties:
\begin{enumerate} 
\item Let $(W,V)$ be a ${\rm spin^c}$ $\zkz$-vector bundle over $(Q,P)$ with the dimension of the fibers equal to $2k$, and let $(E,F)$ denote a vector bundle over $(Q,P)$.  Moreover, assume that $[D_{(Q,P)}]$ is the Dirac operator of $(Q,P)$ twisted $(E,F)$.  We denote the ${\rm spin^c}$ $\zkz$-manifold produced by the vector bundle modification of $(Q,P)$ by $(W,V)$ by $(Q^W,P^V)$.  Let $\pi$ denote the projection $(Q^W,P^V) \rightarrow (Q,P)$ and $\tilde{\pi}$ denote the induced map from $C^*(Q,P;\zkz)$ to $C^*(Q^W,P^V;\zkz)$.  Finally, let $[D_{(Q^W,P^V)}] \in K^{-p-2k}(C^*(Q^W,P^V;\zkz))$ denote the class associated to the Dirac operator (now twisted by vector bundle $(H_Q\otimes \pi^*(E), H_P\otimes \pi^*(F))$; see Definition \ref{vbmzkz} and \cite{BHS}).  Then 
$$\tilde{\pi}^*([D_{(Q^W,P^V)}]=[D_{(Q,P)}] \in K^{-n}(C^*(Q,P;\zkz)$$
\item Suppose that $(Q,P)$ is the boundary of the $\zkz$-manifold, $(\bar{Q},\bar{P})$.  If $\partial$ is 
the boundary map of the six-term exact sequence in K-homology associated to the short exact sequence
$$0 \rightarrow C^*_0(\bar{Q},\bar{P};\zkz) \rightarrow C^*(\bar{Q},\bar{P};\zkz) \rightarrow C^*(Q,P;\zkz) \rightarrow 0$$
then 
$$\partial [D_{\bar{Q},\bar{P}}] = [D_{(Q,P)}] \in K^*(C^*(Q,P;\zkz))$$  
\end{enumerate}
\end{prop}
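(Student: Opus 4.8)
The plan is to prove both parts by lifting everything to the open manifold $N$ (respectively $\bar{N}$) built by attaching the half-infinite cylinders of Example \ref{RosGroEx}, invoking the corresponding statements for \emph{ordinary} Dirac classes on $N$ (Theorems \ref{VBMDiracClass} and \ref{PropOfDiracClass}), and then pushing the resulting equalities down to the groupoid $C^*$-algebras via the naturality of the construction that produces $[D_{(Q,P)}]\in K^{-n}(C^*(Q,P;\zkz))$ out of $[D_N]\in K^{-n}(C_0(N))$. In both cases the geometric content is already contained in the Baum--Higson--Schick results quoted in Section \ref{sectionOnKHom}; what remains is to check that these facts survive the equivariant descent.

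For Part $(1)$, I would first extend the data over the cylinder. The $spin^c$ $\zkz$-vector bundle $(W,V)$ extends to a $spin^c$ vector bundle $W_N$ on $N$ using the product structure on $\partial Q\times[0,\infty)$, and the twisting bundle $(E,F)$ extends to $E_N$. Vector bundle modification of $N$ by $W_N$ (with fiber $S^{2k}$, cf.\ Remark \ref{sphEx}) produces the open manifold $N^{W_N}$, which is precisely the manifold attached to $(Q^W,P^V)$, and $\pi$ extends to a proper map $\pi_N: N^{W_N}\to N$ inducing $\pi_N^*: C_0(N)\to C_0(N^{W_N})$ compatibly with $\tilde{\pi}$. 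Applying the open-manifold form of Theorem \ref{VBMDiracClass} (cf.\ Definition \ref{DiracClassKhomrem}), to the twisted Dirac bundle $S_N\otimes E_N$, gives
\[
(\pi_N^*)^*\big([D_{N^{W_N}}]\big)=[D_N]\in K^{-n}(C_0(N)),
\]
where the twisting on $N^{W_N}$ is by $H_N\otimes\pi_N^*(E_N)$, matching Definitions \ref{vbmclass} and \ref{vbmzkz}. Since the descent is natural with respect to the proper map $\pi_N$, this equality descends to $\tilde{\pi}^*([D_{(Q^W,P^V)}])=[D_{(Q,P)}]$ (the $2k$-fold periodicity identifying the relevant groups).

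For Part $(2)$, I would form $\bar{N}$ from $(\bar{Q},\bar{P})$; this is an open manifold with boundary whose manifold boundary is exactly $N$, the open manifold built from $(Q,P)$. The $\zkz$-exact sequence of Example \ref{GroZkzWithBound} is the descent of the restriction sequence
\[
0\to C_0(\mathrm{int}(\bar{N}))\to C_0(\bar{N})\to C_0(N)\to 0,
\]
with $C^*_0(\bar{Q},\bar{P};\zkz)$, $C^*(\bar{Q},\bar{P};\zkz)$, $C^*(Q,P;\zkz)$ descending from the three terms respectively. On this open-manifold sequence, Theorem \ref{PropOfDiracClass}$(3)$ gives $\partial[D_{\bar{N}}]=[D_N]$ in $K^{-n}(C_0(N))$. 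Because the descent construction carries the restriction sequence to the $\zkz$-sequence as a morphism of short exact sequences, the connecting maps are intertwined, and one obtains $\partial[D_{\bar{Q},\bar{P}}]=[D_{(Q,P)}]$.

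The hard part will be the descent/naturality step in each case, rather than the classical input. Concretely, one must verify that the passage from groupoid-equivariant classes on $C_0(N)$ to classes on $C^*(\mathcal{G})$ commutes with the map induced by $\pi_N$ in Part $(1)$ and with the six-term connecting homomorphism in Part $(2)$. For Part $(2)$ this amounts to matching the mapping-cone/boundary structure of the $\zkz$-sequence with the ordinary boundary-of-a-manifold sequence on $\bar{N}$, i.e.\ checking that the collar identifications $\gamma_i$ and the trivial covering $M^P\to R$ are compatible with the boundary construction of Definition \ref{DiracBundleManBound} on the cylindrical ends. Once this compatibility of the descent with the relevant induced and connecting maps is established, both equalities reduce to the already-proved statements of Theorems \ref{VBMDiracClass} and \ref{PropOfDiracClass}.
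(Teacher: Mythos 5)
Your proposal takes essentially the same route as the paper's proof: both parts are reduced to the commutative-case results (Theorem \ref{VBMDiracClass} with Remark \ref{sphEx} for vector bundle modification, and item (3) of Theorem \ref{PropOfDiracClass} for the boundary map) applied to the open manifold $N$ obtained by attaching cylindrical ends, and the equalities are then pushed down to the groupoid $C^*$-algebras using the fact that the Dirac classes and the relevant maps respect the groupoid. The paper likewise treats the descent/naturality verification as a sketch left to the reader, so your argument matches it in both strategy and level of detail.
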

\begin{proof}
Our proof of the first statement is modelled on the proof of Proposition 3.6 in \cite{BHS}.  A partition of unity argument reduces the proof to the case when $(W,V)$ is a trivial $\zkz$-vector bundle.  We may assume that $Q$ is connected; hence $(Q^W,P^V)=(Q\times S^{2n},P\times S^{2n})$ where $2n$ is the fiber dimension of the bundle $(W,V)$.  The Dirac operator on $C^*(Q^W,P^V)$ is given by $D_{Q,P}\otimes I + I\otimes D_{S^{2n}}$.  As in the commutative case, the Hilbert space $L^2(N\times S^{2n})$ decomposes as follows:
$$L^2(N\times S^{2n})=L^2(N)\otimes L^2(S^{2n}) \cong L^2(N)\otimes ker(D_{S^{2n}}) \oplus L^2(N)\otimes ker(D_{S^{2n}})^{\bot} $$
The Fredholm module $[D_{Q\times S^{2n},P\times S^{2n}}]$ respects this decomposition.  Moreover, using the fact that $ker(D_{S^{2n}})$ is one dimensional and specific form of the Dirac operator, the restriction to the first factor is equal to the module $[D_{Q,P}]$.  \par
To complete the proof, we must show that the restriction to the second factor is trivial in K-homology.  Let $T$ be the partial isometry part of the polar decomposition of $D_{S^{2n}}$ and $\gamma$ be the grading operator on $L^2(N)$.  It is an exercise to show that $\gamma \otimes T$ is an odd-graded involution which anticommutes with operator in the Fredholm module and commutes with the action of $C^*(Q,P)$.  Lemma 2.7 of \cite{BHS} then implies that the restriction to the second factor is a trivial Fredholm module over $C^*(Q,P)$; this completes the proof of the first statement.  
\par
To prove the second statement of the theorem, we use the following commutative diagram (see \cite[Section 9.6]{HR} for details; in particular for the definitions of the relevant $C^*$-algebras). 
\begin{center}
$\minCDarrowwidth15pt\begin{CD}
0 @>>> S(C^*(Q,P)) @>>> C(C^*(Q,P)) @>>> C^*(Q,P) @>>> 0  \\
@. @VV\alpha V @VVV @|  \\
0 @>>> C(C^*(\bar{Q},\bar{P}),C^*(Q,P)) @>>> Q(C^*(\bar{Q},\bar{P}),C^*(Q,P)) @>>> C^*(Q,P) @>>> 0 \\
@. @AA\beta A @AAA @| \\
0 @>>> C^*_0(\bar{Q},\bar{P}) @>>> C^*(\bar{Q},\bar{P}) @>>> C^*(Q,P) @>>> 0
\end{CD}$
\end{center}
Let $b$ denote the boundary map associated to the first exact sequence and $[d]$ denote the Dirac operator on $\field{R}$.  Standard results (see \cite[Section 9.6]{HR}) imply that $\beta^*$ is an isomorphism, $\partial= b \circ \alpha^* \circ (\beta^*)^{-1}$ and $b([d]\times [D_{Q,P}])=[D_{Q,P}]$.  This reduces the proof to showing  
$\alpha^* \circ (\beta^*)^{-1}([D_{\bar{Q},\bar{P}}])=[d]\times [D_{Q,P}]$.  \par
To this end, the reader (upon recalling the notation from Example \ref{GroZkzWithBound}) can verify that 
$$C(C^*(\bar{Q},\bar{P}),C^*(Q,P)))\cong C_0^*(\bar{N}\cup_{\partial \bar{N}} \partial \bar{N} \times [0,\infty), \bar{P} \times [0,\infty)\times [0,\infty))$$  
As such, $C(C^*(\bar{Q},\bar{P}),C^*(Q,P))$ has an associated Dirac class which we denote by $(\mathcal{H},\hat{\rho},F)$.   
The proof will be complete upon showing that
\begin{equation*}
\beta^*(\mathcal{H},\hat{\rho},F) \sim [D_{\bar{Q},\bar{P}}] \hbox{ and }
\alpha^*(\mathcal{H},\hat{\rho},F) \sim [d]\times[D_{Q,P}]
\end{equation*}
For the first of these statements, consider
$$ \beta^*(\mathcal{H},\hat{\rho},F)=[\mathcal{H},\hat{\rho}\circ \beta, F)]\sim [p\mathcal{H},\hat{\rho}\circ \beta,pFp]$$
where $p$ is the projection onto the image of $(\hat{\rho} \circ \beta)$ (i.e., $C^*_0(\bar{Q},\bar{P})$).  By construction, $p\mathcal{H}\cong L^2(\bar{N})$ and $\hat{\rho}\circ \beta=\rho_{(\bar{Q},\bar{P})}$ on $L^2(\bar{N})$.  Moreover, for each $(f,g)\in C_0^*(\bar{Q},\bar{P})$,  
$$\rho_{(\bar{Q},\bar{P})}(f,g)(pFp\chi(D_{\bar{Q},\bar{P}})+\chi(D_{\bar{Q},\bar{P}})pFp)\rho_{\bar{Q},\bar{P}}(f,g)^*\ge 0 \hbox{ mod } \mathcal{K}(L^2(\bar{N}))$$ 
This follows since (by \cite[Section 10.8]{HR}) it holds for $(f,g)$ in the image of the inclusion of $C_0({\rm int}(N))$ into $C^*(\bar{Q},\bar{P})$ and there exists an approximate unit (for all of $C_0^*(\bar{Q},\bar{P})$) in this image.  Finally, \cite[Proposition 8.3.16]{HR} implies $\beta^*(\mathcal{H},\hat{\rho},F) \sim [D_{\bar{Q},\bar{P}}]$.  The proof that $\alpha^*(\mathcal{H},\hat{\rho},F) \sim [d]\times[D_{Q,P}]$ is similar; one replaces the projection, $p$, above with the projection onto $L^2(N\times (0,\infty))$.  This completes the proof that $\partial [D_{\bar{Q},\bar{P}}]=[D_{Q,P}]$. 
\end{proof}
Following \cite{Ros1}, we replace the collapse to point map in classical index theory with the $*$-homomorphism
\begin{eqnarray}
c: & C^*(pt;\zkz) & \rightarrow C^*(Q,P;\zkz)  \label{cMapZkz}\\
& h & \mapsto h(0) 1_Q \oplus (1_P \otimes h) \nonumber
\end{eqnarray} 
where we have used Equation \ref{cStarZkz} and the fact that
$$C^*(pt;\zkz) \hookrightarrow \{ f\in C_0([0,\infty), M_k)| f(0) \hbox{ diagonal }\}$$
\begin{define} \label{anaIndAndHomc}
Using the notation of the previous paragraph, we define the analytic index of $D$ to be $c^*([D])$.  
\end{define}

\subsection{Map between geometric and analytic $K$-homology with coefficients} 
\begin{define} \label{ctsToStarHom}
Given a continuous map, $f: (Q,P) \rightarrow X$, we define a $*$-homomorphism via 
\begin{eqnarray*}
\tilde{f}: & C(X) \otimes C^*(pt;\zkz) & \rightarrow C^*(Q,P;\zkz) \\
& g\otimes h & \mapsto h(0) f_Q^*(g) \oplus (f_P^*(g) \otimes h)
\end{eqnarray*} 
where we have denoted by $f_Q^*$ and $f_P^*$ the $*$-homomorphism induced by $f|_Q$ and $f|_P$.  Note that we have also used Equation \ref{cStarZkz} and the fact that
$$C^*(pt;\zkz) \hookrightarrow \{ g\in C_0([0,\infty), M_k) \: | \: g(0) \hbox{ diagonal }\}$$
Let $\tilde{f}^*$ denote the map induced on $K$-homology by this $*$-homomorphism. 
\end{define}
In the case when $X=pt$, the $*$-homomorphism from Definition \ref{ctsToStarHom}, $\field{C}\otimes C^*(pt;\zkz) \rightarrow C^*(Q,P;\zkz)$, is the same as the one defined in Equation \ref{cMapZkz}. (One must first identify $\field{C} \otimes C^*(pt;\zkz)$ with $C^*(pt;\zkz)$).
\begin{define}
Let $X$ be a compact Hausdorff space.  Let $\Phi$ be the map between geometric $\zkz$-cycles and analytic $\zkz$-cycles defined by
\begin{equation*}
((Q,P),(E,F),f) \mapsto \tilde{f}^*([D_{(E,F)}])
\end{equation*}
where $[D_{(E,F)}] \in K^*(C^*(Q,P;\zkz))$ is the class of the Dirac operator twisted by $(E,F)$ and $\tilde{f}^*$ is the map on $K$-homology induced from $f$ (see Definition \ref{ctsToStarHom}). \label{defIsoGeoAnazkz} 
\end{define} 
Our first goal is to show that the map, $\Phi$, is well-defined (i.e., show that the class in analytic $K$-homology is invariant under the relations on the geometric $\zkz$-cycles).  The proof for the disjoint union operation is trivial.  \par
The case of vector bundle modification follows from Item 1 in Proposition \ref{propZkzClass}. 
\begin{cor}
Let $X$ be a compact Hausdorff space, $((Q,P),(E,F),f)$ a $\zkz$-cycle over $X$, and $\pi:(W,V) \rightarrow (Q,P)$ a ${\rm spin^c}$ $\zkz$-vector bundle with even dimensional fiber.  Then
$\tilde{f}^*([D_{(E,F)}])= (\tilde{f} \circ \tilde{\pi})^*([D_{(E^W,F^V)}]) $.
\label{invFMindexVBM}
\end{cor}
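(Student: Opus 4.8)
The plan is to derive the corollary formally from Item~1 of Proposition~\ref{propZkzClass} together with the contravariant functoriality of Kasparov $K$-homology. The notation $(\tilde{f}\circ\tilde{\pi})^*$ is to be read as the composite $\tilde{f}^* \circ \tilde{\pi}^*$ of induced maps on $K$-homology, which carries $K^*(C^*(Q^W,P^V;\zkz))$ first to $K^*(C^*(Q,P;\zkz))$ and then to $K^{ana}_*(X;\zkz)$. The only content beyond Proposition~\ref{propZkzClass} is the observation that the assignment $f \mapsto \tilde{f}$ of Definition~\ref{ctsToStarHom} is compatible with composition by the bundle projection $\pi$, in the sense that
\begin{equation*}
\widetilde{(f \circ \pi)} = \tilde{\pi} \circ \tilde{f}
\end{equation*}
as $*$-homomorphisms $C(X) \otimes C^*(pt;\zkz) \rightarrow C^*(Q^W,P^V;\zkz)$. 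Granting this, contravariant functoriality gives $\tilde{f}^* \circ \tilde{\pi}^* = \widetilde{(f\circ\pi)}^*$, hence
\begin{equation*}
(\tilde{f}\circ\tilde{\pi})^*([D_{(E^W,F^V)}]) = \tilde{f}^*\bigl(\tilde{\pi}^*([D_{(E^W,F^V)}])\bigr) = \tilde{f}^*([D_{(E,F)}]),
\end{equation*}
where the final equality is precisely Item~1 of Proposition~\ref{propZkzClass}. This chain is the whole proof.

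First I would verify the functoriality identity by a direct computation on elementary tensors $a \otimes h$, with $a \in C(X)$ and $h \in C^*(pt;\zkz)$. The essential point is that the projection $\pi$ restricts compatibly to the two pieces of the $\zkz$-manifold, namely $(f\circ\pi)|_{Q^W} = (f|_Q)\circ \pi_Q$ and $(f\circ\pi)|_{P^V} = (f|_P)\circ \pi_P$, where $\pi_Q$ and $\pi_P$ denote the induced projections on the $Q$- and $P$-parts. Using the explicit formula of Definition~\ref{ctsToStarHom} and the description of $\tilde{\pi}$ as pullback of functions along $\pi$ in each coordinate of the presentation in Equation~\ref{cStarZkz}, one checks that both $\widetilde{(f\circ\pi)}(a \otimes h)$ and $\tilde{\pi}(\tilde{f}(a \otimes h))$ equal the pair
\begin{equation*}
h(0)\, \pi_Q^*(f_Q^*(a)) \;\oplus\; \bigl(\pi_P^*(f_P^*(a)) \otimes h\bigr),
\end{equation*}
since the scalar $h(0)$ and the factor $h$ are left untouched by $\tilde{\pi}$, which acts only in the base variables. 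One should also note that the diagonal and boundary-matching conditions defining $C^*(Q^W,P^V;\zkz)$ in Equation~\ref{cStarZkz} are automatically preserved, as $\tilde{\pi}$ is already a well-defined $*$-homomorphism in the sense made explicit in Proposition~\ref{propZkzClass}.

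I do not anticipate a serious obstacle, since the genuinely analytic input, namely the behaviour of the Dirac class under vector bundle modification, is already packaged in Proposition~\ref{propZkzClass} (which rests in turn on Theorem~\ref{VBMDiracClass} and Remark~\ref{sphEx}). The one point demanding care is keeping the directions of the induced maps consistent: $\tilde{\pi}$ goes from $C^*(Q,P;\zkz)$ up to $C^*(Q^W,P^V;\zkz)$, so on $K$-homology $\tilde{\pi}^*$ lowers the class $[D_{(E^W,F^V)}]$ back to $K^*(C^*(Q,P;\zkz))$, matching the grading shift produced by the even-dimensional modification, before $\tilde{f}^*$ carries it into $K^{ana}_*(X;\zkz)$. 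Beyond this bookkeeping the corollary is a formal consequence of functoriality and the already-established invariance of the Dirac class.
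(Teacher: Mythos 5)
Your proof is correct and follows essentially the same route as the paper's: both establish the compatibility $\tilde{\pi}\circ\tilde{f} = \widetilde{(f\circ\pi)}$ of induced $*$-homomorphisms, then combine contravariant functoriality of $K$-homology with Item~1 of Proposition~\ref{propZkzClass} to conclude. Your explicit check of the compatibility identity on elementary tensors $a\otimes h$ merely fills in a detail the paper asserts directly from the definition of vector bundle modification.
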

\begin{proof}
Using Proposition \ref{propZkzClass} and the fact that $\tilde{\pi} \circ \tilde{f}_{(Q,P)}  = \tilde{f}_{(Q^W,P^V)}$ we obtain
\begin{eqnarray*}
\tilde{f}_{(Q^W,P^V)}^*([D_{(Q^W,P^V)}]) & = & (\tilde{\pi} \circ \tilde{f}_{(Q,P)})^*([D_{(Q^W,P^V)}]) \\
& = & \tilde{f}_{(Q,P)}^* ( \tilde{\pi}^*([D_{(Q^W,P^V)}])) \\
& = & \tilde{f}_{(Q,P)}^*([D_{(Q,P)}]) 
\end{eqnarray*}
\end{proof}
This completes the proof that $\Phi$ respects vector bundle modification.  The case of $\zkz$-bordism is less clear.  Our proof is similar to the proof in the commutative case given in \cite{BDT} (also see \cite[Exercise 11.8.10]{HR}). To begin, note that if $(\bar{Q},\bar{P})$ is a $\zkz$-manifold with boundary and $\bar{f}:(\bar{Q},\bar{P})\rightarrow X$ is a continuous map then we can define a $*$-homomorphism via
\begin{eqnarray*}
\widetilde{\bar{f}}:  C(X)\otimes C^*(pt;\zkz) & \rightarrow & C^*(\bar{Q},\bar{P},\tilde{\pi}) \\
  h\otimes r & \mapsto & (h \circ f) \oplus ((h \circ f|_{\bar{P}}) \otimes r)
\end{eqnarray*}
The reader can verify the next lemma.
\begin{lemma} \label{factorThrough}
Let $X$ be a compact Hausdorff space and $((Q,P),(E,F),f)$ be a $\zkz$-cycle over $X$, which is the boundary of $((\bar{Q},\bar{P}),(\bar{E},\bar{F}),\bar{f})$.  Then the map $\tilde{f}^*$ (see Definition \ref{ctsToStarHom}) factors through the map induced from the inclusion of the boundary.  
\end{lemma}
\begin{theorem}
If $((Q,P),(E,F),f)$ is a $\zkz$-cycle which is a boundary, then $\tilde{f}^*([D_{(Q,P),(E,F)}])=0$ in $K^*_{ana}(C(X);\zkz)(=K^*(C(X)\otimes C^*(pt;\zkz)))$. \label{coborInvClass}
\end{theorem}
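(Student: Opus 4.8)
The plan is to carry over, essentially verbatim, the classical cobordism-invariance argument recalled in the paragraphs immediately preceding the statement, now driving it by the $\zkz$-version of the exact sequence of $C^*$-algebras (Equation \ref{zkzBoundExact}) in place of the commutative sequence $0 \to C_0(M) \to C(\bar{M}) \to C(\partial M) \to 0$. Suppose $((Q,P),(E,F),f)$ is the boundary of some $((\bar{Q},\bar{P}),(\bar{E},\bar{F}),\bar{f})$ in the sense of Definition \ref{zkzCycleBordism}, so that in particular $(E,F)=(\bar{E},\bar{F})|_{\partial(\bar{Q},\bar{P})}$ and $f=\bar{f}|_{\partial(\bar{Q},\bar{P})}$.

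First I would apply the analytic $K$-homology functor to the short exact sequence \ref{zkzBoundExact} to obtain its associated six-term exact sequence, in which (writing $n=\dim Q$) the boundary map
\begin{equation*}
\partial : K^{-n-1}(C^*_0(\bar{Q},\bar{P};\zkz)) \to K^{-n}(C^*(Q,P;\zkz))
\end{equation*}
is immediately followed by the restriction map $r^* : K^{-n}(C^*(Q,P;\zkz)) \to K^{-n}(C^*(\bar{Q},\bar{P};\zkz))$. The central input is Item 2 of Proposition \ref{propZkzClass}, applied to the Dirac operator on $(\bar{Q},\bar{P})$ twisted by $(\bar{E},\bar{F})$: it gives $\partial[D_{(\bar{Q},\bar{P}),(\bar{E},\bar{F})}] = [D_{(Q,P),(E,F)}]$, the twisting restricting to $(E,F)$ on the boundary. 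Hence $[D_{(Q,P),(E,F)}]$ lies in the image of $\partial$, and exactness of the six-term sequence at $K^{-n}(C^*(Q,P;\zkz))$ forces
\begin{equation*}
r^*([D_{(Q,P),(E,F)}]) = (r^*\circ\partial)[D_{(\bar{Q},\bar{P}),(\bar{E},\bar{F})}] = 0,
\end{equation*}
since consecutive maps in the six-term sequence compose to zero.

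To finish, I would invoke Corollary \ref{factorThrough}, which asserts that $\tilde{f}^*$ factors through the restriction map as $\tilde{f}^* = \widetilde{(\bar{f})}^* \circ r^*$. Applying this to the Dirac class yields
\begin{equation*}
\tilde{f}^*([D_{(Q,P),(E,F)}]) = \widetilde{(\bar{f})}^*\bigl(r^*([D_{(Q,P),(E,F)}])\bigr) = \widetilde{(\bar{f})}^*(0) = 0,
\end{equation*}
which is exactly the assertion of Theorem \ref{coborInvClass}.

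The main obstacle lies not in this final chain of implications, which is purely formal once the tools are in hand, but in verifying that the two results we quote genuinely apply in the twisted setting. Specifically, one must confirm that the twisting bundle $(\bar{E},\bar{F})$ on $(\bar{Q},\bar{P})$ really does restrict, along the boundary identification of Remark \ref{zkzbound}, to the data $(E,F)$ entering $[D_{(Q,P),(E,F)}]$, so that Proposition \ref{propZkzClass}(2) produces precisely the class appearing in the theorem and not a class twisted by some other bundle; and that the factorization of Corollary \ref{factorThrough} is compatible with this same twisting, i.e.\ that the representations underlying the Fredholm modules are unaffected by the twist. Both are bookkeeping verifications that reduce to the definition of the $\zkz$-boundary (Definition \ref{zkzCycleBordism}) together with the naturality already encoded in Examples \ref{RosGroEx} and \ref{GroZkzWithBound}, but they are the only points at which the presence of the twist could cause the argument to break.
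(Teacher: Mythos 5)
Your proposal is correct and follows essentially the same route as the paper's own proof: the six-term sequence attached to the exact sequence of Example \ref{GroZkzWithBound}, Item 2 of Proposition \ref{propZkzClass} to identify $[D_{(Q,P),(E,F)}]$ as $\partial[D_{(\bar{Q},\bar{P}),(\bar{E},\bar{F})}]$, exactness to kill $r^*$ of that class, and Corollary \ref{factorThrough} to conclude. Your closing remarks on checking compatibility of the twisting data with the boundary identification are a reasonable bookkeeping caveat, but they do not constitute a departure from the paper's argument.
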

\begin{proof}
Denote by $((\bar{Q},\bar{P}),(\bar{E},\bar{F}),\bar{f})$ the cycle which has boundary, $((Q,P),(E,F),f)$.  
Applying the analytic K-homology functor to the short exact sequence in Equation \ref{zkzShortExact} (see the discussion following Example \ref{GroZkzWithBound}), we get long exact sequence:
$$ \rightarrow K^*(C_0^*(\bar{Q},\bar{P})) \stackrel{\partial}{\rightarrow} K^{*+1}(C^*(Q,P)) \stackrel{r^*}{\rightarrow} K^{*+1}(C^*(\bar{Q},\bar{P})) \stackrel{i^*}{\rightarrow} K^{*+1}(C_0^*(\bar{Q},\bar{P})) \rightarrow $$
Item 2) of Proposition \ref{propZkzClass} and exactness imply that
$$0=(r^* \circ \partial)([D_{(\bar{Q},\bar{P}),(\bar{E},\bar{F})}])=r^*([D_{(Q,P),(E,F)}])$$
Moreover, using Lemma \ref{factorThrough}, we have that 
$$\tilde{f}^*([D_{(Q,P),(E,F)}])=\tilde{\bar{f}}^*(r^*([D_{(Q,P),(E,F)}])=0$$
\end{proof}
\begin{theorem}
Let $X$ be a finite CW-complex.  Then the map, $\Phi$, constructed in Definition \ref{defIsoGeoAnazkz} is an isomorphism. \label{thmIsoGeoAnazkz}
\end{theorem}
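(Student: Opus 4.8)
The strategy is to assemble a commutative ladder between the geometric Bockstein sequence of Theorem~\ref{Bockstein} and the analytic Bockstein sequence recorded in the remark after Definition~\ref{anaKhomCoeffDef}, in which the vertical maps over the integer-coefficient groups are the Baum--Douglas isomorphism $\mu$ of Theorem~\ref{isoBDmodel} and the vertical maps over the $\zkz$-coefficient groups are the map $\Phi$ of Definition~\ref{defIsoGeoAnazkz}; the conclusion is then immediate from the Five Lemma. Both rows are already known to be exact, so the whole argument reduces to checking that the three kinds of squares, those governed by the maps $k$, $r$ and $\delta$ of Theorem~\ref{Bockstein}, commute. Granting this, the five consecutive terms $K_0(X)\xrightarrow{k}K_0(X)\xrightarrow{r}K_0(X;\zkz)\xrightarrow{\delta}K_1(X)\xrightarrow{k}K_1(X)$ carry the vertical maps $\mu,\mu,\Phi,\mu,\mu$, and since the four copies of $\mu$ are isomorphisms the Five Lemma forces $\Phi$ to be an isomorphism on $K_0(X;\zkz)$; the argument for $K_1(X;\zkz)$ is identical.

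I would dispose of the two easy squares first. The map $k$ is multiplication by $k$ on both sides (on the analytic side because the inclusion $\field{C}\hookrightarrow M_k$ defining $C^*(pt;\zkz)$ induces multiplication by $k$ on $K$-homology, and this inclusion furnishes the connecting map of the defining mapping cone), and $\mu$, being a homomorphism, commutes with it. For the reduction square I would invoke Remark~\ref{emptySetZkz}, which identifies $C^*(M,\emptyset;\zkz)$ with $C(M)\otimes C^*(pt;\zkz)$: under this identification the $*$-homomorphism $\tilde f$ of Definition~\ref{ctsToStarHom} attached to $r^{geo}(M,E,f)=((M,\emptyset),(E,\emptyset),f)$ is $f^*\otimes\mathrm{id}$, while the Dirac class $[D_{(E,\emptyset)}]$ is the exterior product of $[D_E]\in K^*(C(M))$ with the generator of $K^0(C^*(pt;\zkz))\cong\zkz$. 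Since the analytic reduction $r^{ana}$ is induced by the quotient $C(X)\otimes C^*(pt;\zkz)\to C(X)$ (evaluation at $0$), and this quotient carries the point class of $\field{C}$ to that same generator, one obtains $\Phi\circ r^{geo}=r^{ana}\circ\mu$.

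The substantive step, and the one I expect to be the main obstacle, is the boundary square $\mu\circ\delta^{geo}=\delta^{ana}\circ\Phi$, where $\delta^{geo}$ sends $((Q,P),(E,F),f)$ to $(P,F,f|_P)$ and $\delta^{ana}$ is the ideal-restriction map of the six-term sequence attached to $0\to C(X)\otimes C_0((0,\infty),M_k)\to C(X)\otimes C^*(pt;\zkz)\to C(X)\to 0$. Here I would use naturality of the six-term sequence: the $*$-homomorphism $\tilde f$ maps this short exact sequence to Rosenberg's sequence $0\to C_0(\field{R})\otimes C(P)\otimes M_k\to C^*(Q,P;\zkz)\to C(Q)\to 0$ of Example~\ref{RosGroEx} (it carries the ideal into the ideal, restricting there to $f_P^*\otimes\mathrm{id}$), so the square of ideal-restriction maps commutes and reduces the computation to a single identification: the image of $[D_{(E,F)}]$ under the restriction $K^*(C^*(Q,P;\zkz))\to K^*(C_0(\field{R})\otimes C(P)\otimes M_k)$ must equal the Dirac class $[D_F]$ of $P$, under the suspension-plus-Morita isomorphism $K^*(C_0(\field{R})\otimes C(P)\otimes M_k)\cong K^{*+1}(C(P))$.

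This last identification is the $\zkz$-analogue of items (2) and (3) of Theorem~\ref{PropOfDiracClass} and is the technical heart of the square: on the open collar $P\times\field{R}$ (the $k$ sheets accounting for the factor $M_k$) the extended Dirac operator $D_N$ restricts to the product Dirac operator, whose $K$-homology class is the suspension of $[D_F]$, exactly as in the restriction property for open submanifolds. Feeding this into the naturality square yields $\delta^{ana}(\tilde f^*([D_{(E,F)}]))=(f|_P)_*([D_F])=\mu(P,F,f|_P)$, which completes the ladder; the Five Lemma then gives the theorem.
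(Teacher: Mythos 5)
Your proposal is correct and follows essentially the same route as the paper: the ladder of geometric and analytic Bockstein sequences with vertical maps $\mu,\mu,\Phi$, commutativity of the three squares (the $k$-square via $\mu$ being a homomorphism, the $r$-square via Remark \ref{emptySetZkz}, and the $\delta$-square via naturality applied to Rosenberg's exact sequence together with the restriction $[D_{(E,F)}]\mapsto[D_F]$), and then the Five Lemma combined with Theorem \ref{isoBDmodel}. Your treatment of the squares is somewhat more detailed than the paper's, but the argument is the same.
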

\begin{proof}
To begin, we prove that the Bockstein sequences of the analytic and geometric models fit into the following commutative diagram (we show only the $K_0$-part of the relevant commutative diagram): 
\begin{center}
$\begin{CD}
@>>> K^{geo}_0(X) @>>> K^{geo}_0(X) @>>> K^{geo}_0(X;\zkz) @>>>  \\
@. @VV\mu V @VV\mu V @VV\Phi V  \\
@>>> K^{ana}_0(X) @>>> K^{ana}_0(X) @>>> K^{ana}_0(X;\zkz) @>>> 
\end{CD}$
\end{center}
where we recall that $\mu$ denotes the natural transformation defined from geometric K-homology to analytic K-homology (see Equation \ref{isoNoCoeff} in the Introduction).  Commutativity follows from the following three facts: 
\begin{enumerate}
\item $\mu$ is a group homomorphism and hence commutes with multiplication by $k$;
\item The $C^*$-algebra associated to a $\zkz$-manifold of the form $(M,\emptyset)$ is $C(M)\otimes C^*(pt;\zkz)$ and the $*$-homomorphism from $C(X)\otimes C^*(pt;\zkz)$ to $C(M)\otimes C^*(pt;\zkz)$ is $f \otimes id$ (see Remark \ref{emptySetZkz} for more details).
\item The commutative diagram:
\begin{center}
$\begin{CD}
C(X)\otimes C_0(0,\infty) \otimes M_k @>>> C(X)\otimes C(pt;\zkz)   \\
 @VV\tilde{f}|_P \otimes id \otimes id V   @VV\tilde{f} V  \\
C(P)\otimes C_0(0,\infty) \otimes M_k @>>> C^*(Q,P;\zkz)  
\end{CD}$
\end{center} 
and the fact that $[D_{(Q,P)}]$ is mapped to $[D_P]$ under the map:
$$K^*(C^*(Q,P;\zkz)) \rightarrow K^*(C(P)\otimes C_0(0,\infty) \otimes M_k) \cong K^{*+1}(C(P))$$
\end{enumerate}  
The result then follows using the exactness of the Bockstein sequences, the fact that the map between geometric and analytic $K$-homology is an isomorphism, and the Five Lemma.
\end{proof}
\section{Connection with the Freed-Melrose Index Theorem}
The isomorphism from geometric $K$-homology (i.e., the Baum-Douglas model) to analytic $K$-homology (i.e., Kasparov model via Fredholm modules) leads to the Atiyah-Singer index theorem by considering the case of a point.  That is, from the commutativity of the following diagram: 
\begin{center}
$$\xymatrix{
 K^{geo}_0(pt) \ar[rdd]_{ind_{top}}  
 \ar[rr]^{\mu} 
 & & K^{ana}_0(pt)  \ar[ldd]^{ind_{ana}}  
 \\ \\
& \field{Z} &
} $$
\end{center}
In the case of geometric $K$-homology with coefficients in $\zkz$, we have an analogous diagram; namely,
\begin{center}
$$\xymatrix{
 K^{geo}_0(pt;\zkz) \ar[rdd]_{ind^{FM}_{top}}  
 \ar[rr]^{\Phi} 
 & & K^{ana}_0(pt;\zkz)  \ar[ldd]^{ind^{FM}_{ana}}  
 \\ \\
& \zkz &
} $$
\end{center}
In this case, the commutativity of the diagram is the statement of the Freed-Melrose index theorem (c.f., \cite{Ros1}). \par
We can also consider pairings.  For example, the pairing 
$$K^0(X) \times K_0(X;\zkz) \rightarrow K_0(pt;\zkz)$$
is given (analytically) by the Kasparov product:
\begin{equation}
KK^0(\field{C},C(X)) \times KK^0(C(X)\otimes C^*(pt;\zkz),\field{C}) \rightarrow KK^0(C^*(pt;\zkz),\field{C}) \label{KasParK0}
\end{equation}
While, on the geometric side, we have the following.  Let $V$ be a vector bunlde over $X$ and $((Q,P),(E,F),f)$ be a geometric $\zkz$-cycle.  Then the pairing is given by: 
$$ind^{FM}_{top}( D^{(Q,P)}_{(E\otimes f^*(V),F\otimes (f|_P)^*(V))})$$
This pairing naturally extends to K-theory and, moreover, is equal to the Kasparov pairing above (i.e., Equation \ref{KasParK0}).  This follows from the associativity of the Kasparov product and the fact that the pairing between a $\zkz$-vector bundle and the Dirac operator on $(Q,P)$ is given by twisting the operator by the bundle.  Finally, using the orginal formulation of the Freed-Melrose index theorem (c.f., Corollary 5.4 in \cite{FM}), we have that this pairing is also equal to the Fredholm index of the operator $D^{Q}_{(E\otimes f^*(V))}$ with the Atiyah-Patodi-Singer boundary conditions (see \cite{APS1}).  \par
We have a similar pairing between the groups $K^1(X)$ and $K_1(X;\zkz)$.  However, the reader may recall that the pairing between $K^1(X)$ and $K_1(X)$ is given by the index of a Toeplitz operator (see the introdcution of \cite{DZ} for details).  Thus one is led to ask if there is an analogous index theorem for $\zkz$-manifolds.  To the author's knowledge, this is unknown.  However, in \cite{DZ}, an index theorem for Toeplitz operators on odd-dimensional manifolds with boundary is developed.  Thus it is natural to ask if, in the case of a $\zkz$-manifold, the mod k reduction of this index is a topological invariant and, moreover, if it is equal to the pairing between $K^1(X)$ and $K_1(X;\zkz)$.  Both these questions represent future work. \vspace{0.25cm} \\    
{\bf Acknowledgments} \\
I would like to thank my PhD supervisor, Heath Emerson, for useful discussions on the content and style of this document.  In addition, I thank Nigel Higson, Jerry Kaminker, John Phillips, and Ian Putnam, for discussions.  In particular, I would like to thank Jerry Kaminker for bringing the problem of a ``Toeplitz index theorem" for $\zkz$-manifolds and the reference \cite{DZ} to my attention.  This work was supported by NSERC through a PGS-Doctoral award.  
\vspace{-0.1cm}

Email address: rjdeeley@uni-math.gwdg.de  \\
{ \footnotesize MATHEMATISCHES INSTITUT, GEORG-AUGUST UNIVERSIT${\rm \ddot{A}}$T, BUNSENSTRASSE 3-5, 37073 G${\rm \ddot{O}}$TTINGEN, GERMANY}
\end{document}